\title{Characterization of the near-null error components utilized in composite adaptive AMG solvers
\thanks{
This work was partially supported by the National Science Foundation through NSF RTG grant DMS-2136228. Computational resources used in this work were supported by the Oregon Regional Computing Accelerator (Orca), funded by NSF CC*-2346732.
}}
\author{Austen J. Nelson\footnotemark[2]
        \and Panayot S. Vassilevski\footnotemark[3]}
\shorttitle{Near-null components characterization in composite aAMG} 
\begin{document}

\maketitle

\renewcommand{\thefootnote}{\fnsymbol{footnote}}

\footnotetext[2]{ajn6@pdx.edu}
\footnotetext[3]{panayot@pdx.edu}

\begin{abstract}
	We provide a theoretical justification for the construction of adaptive composite solvers based on a sequence of AMG (algebraic multigrid) $\mu$-cycle methods that exploit error components that the current solver cannot damp efficiently. 
	Each solver component is an aggregation based AMG where its aggregates are constructed using the popular in graph community detection modularity matrix. The latter utilizes the given matrix and the error component vector the current solver cannot handle. The performance of the resulting adaptive composite solver is illustrated on a variety of sparse matrices both arising from discretized PDEs and ones with more general nature. 
\end{abstract}

\begin{keywords}
  composite adaptive algebraic multigrid, preconditioner, solver, anisotropy, spd, graph, network, modularity matching
\end{keywords}

\begin{AMS}
    MSC Primary 15; Secondary 65;
\end{AMS}

\section{Introduction}
\label{section: introduction}
Adaptive AMG (or aAMG), also known as bootstrap AMG (or BAMG), gives examples of a purely algebraic strategy for designing linear solvers with guaranteed convergence property. They go back to publications by Achi Brandt and Steve McCormick and their coauthors, e.g., \cite{BrandtBAMG, BFMMMR2004, BFMMMR2006, BBKL2011}. 
The idea behind aAMG methods is exactly a bootstrap one, i.e., 'use the method to improve the method.'  More specifically, we test a given method B to detect components of the error of the process that are not reduced effectively. In the setting of linear systems with a matrix $A$, these are vectors $\bv$ such that the iteration matrix $E= I-B^{-1} A$ does not visibly reduce them, i.e.
$E \bv \approx \bv$, or equivalently $B^{-1}A \bv \approx 0$.

In the present paper, for the class of aAMG methods, namely the adaptive composite AMG introduced in \cite{DV2013}, we provide some theoretical justification for vectors $\bv$ such that $B^{-1}A \bv \approx 0$ actually satisfies $A \bv \approx 0$ for AMG-type solvers $B$ and their composition. That is, the error components $\bv$ for which the method $B$ applied to $A$ is not efficient, are actually nearly null components of the matrix $A$ (also called algebraically smooth components). This is an important characterization, since the AMG solvers that we construct in an adaptive process require (explicit) access only to the entries of the matrix $A$ and the constructed vector $\bv$ (and not the entries of the current solver inverse 
$B^{-1}$ since they are not feasibly accessible). 

We also illustrate the convergence behavior of the aAMG methods that are composed of a sequence of $\mu$ cycles, where each $\mu$ cycle is an aggregation AMG constructed on the basis of the given matrix $A$ and the algebraically smooth vector currently computed $\bv$. In the coarsening process, to construct the hierarchy of aggregates, we use a graph-based coarsening derived from the popular in the graph clustering literature a specially constructed modularity matrix that utilizes $A$ and the vector $\bv$. Some preliminary results for a two-level setting were previously reported in \cite{Quiring2019}.

In what follows, we consider an s.p.d. large sparse matrix $A = (a_{ij})^n_{i,j=1}$. 
We will be interested in the stationary iterative process for solving systems of equations $A \bx = \bb$ with an $A$-convergent method $B$. The method  $B$ is assumed to give rise to a s.p.d. mapping. That is, we assume that the iteration matrix $E = I-B^{-1}A$ satisfies $\|E \bv\|_A \le \varrho \|\bv\|_A$ for a $\varrho < 1$.  Here and in what follows, $\|\bw \|_A= \sqrt{\bw^TA\bw}$ stands for the standard $A$-norm.

\subsection*{Outline}
The remainder of the paper is structured as follows. In Section~\ref{section: composition of solvers}, we describe in general terms the composition of solvers and study some of their properties. 
In Section~\ref{section: Characterization of near null components}, we prove our result characterizing the near null components of $B^{-1}A$ as near null components of $A$ under some assumptions on $B$.
In Section~\ref{section: the adaptive process} we use the results up to this point to motivate and present the main algorithm used to construct composite adaptive operators.
In Section~\ref{section: modularity based AMG}, we describe one strategy to create an aggregation based V-cycle exploiting a near-null vector $\bv$. The vector $\bv$ for which $A \bv \approx 0$ is used to scale the matrix $A$ and its resulting scaled matrix entries are used to assign edge weights to the sparsity matrix graph (sometimes referred to as the `strength of connection' graph). These weights are then utilized in a recursive matching algorithm to construct aggregates which are aimed to follow the strong connections defined by the matrix entries and the algebraically smooth vector $\bv$. The latter is motivated and achieved in practice by the popular modularity based community detection, cf., Newman \cite{Newman}, and its most successful implementation referred to as the Louvain algorithm \cite{Blondel2008}. By optimizing the corresponding modularity functional, it is aimed to group vertices in a community that are strongly connected with weaker connections across communities. In our setting, aggregates serve as communities. That is, the aggregates that we construct respect the strength of connectivity in a similar way as in the Louvain community detection algorithm. We conclude this section with a mention to smoothed aggregation (SA-AMG) which provides a well studied recipe to incorporate nodal aggregates into the construction of interpolation, restriction, and coarse grid operators.
In Section~\ref{section: experiments}, we demonstrate the performance of the modularity based composite aAMG algorithm on a variety of difficult test problems corresponding to both PDE discretization matrices and to ones of more general nature.

\section{Symmetric composition of solvers}\label{section: composition of solvers}
Consider $A \bx = \bb$ with a sparse s.p.d. matrix $A$. Let $B_0$ be a $A$-convergent solver for $A$. In our main application $B_0$ could be one of the three choices: the 
$\ell_1$-smoother, the forward Gauss-Seidel one, and a symmetric $\mu$-cycle ML (multilevel) solver. 

Our goal is by testing a solver $B_0$ on $A \bx =0$ (starting with a nonzero initial iterate) in case a convergence stall occurs, to use the near-null component vector $\bw = \frac{\bx}{\|\bx\|}$, where $\bx:\; (I-B^{-1} A)\bx \approx \bx$ to construct a new solver $B_1 = B(\bw)$ that depends on the near-null component $\bw$, so that $B_1$ eliminates $\bw$ from the error. Our choice for $B_1=B(\bw)$ is an aggregation-based AMG which builds the interpolation matrices that have $\bw$ in their range. This is described in detail in Section~\ref{section: modularity based AMG}.

Once $B_1= B(\bw)$ is available, we combine the original solver $B_0$ with the newly created one using the following symmetric composition strategy.

Given two solvers $B_0$ and $B_1$ (and $B^T_1$ in the nonsymmetric case), we look at the product iteration matrix which defines a new solver $B$ from the equation

\begin{equation}\label{symmetric composition of two solvers: iteration matrix}
I- B^{-1} A = (I-B^{-T}_1 A) (I-B^{-1}_0A)(I-B^{-1}_1 A).
\end{equation}

The new solver $B$ combines $B_1, B_0$ and $B^T_1$. More explicitly, we have the formula

\begin{equation}\label{symmetric composition of two solvers}
B^{-1} = {\overline B}^{-1}_1 + (I-B^{-T}_1A) B^{-1}_0 (I-AB^{-1}_1).
\end{equation}

The solver ${\overline B}_1$ is a symmetrization of $B_1$, i.e., it corresponds to the iteration formula 

\begin{equation*}
I-{\overline B}^{-1}_1A = (I-B^{-T}_1A)(I-B^{-1}_1A),
\end{equation*}

or more explicitly 

\begin{equation}\label{symmetrized B_1}
{\overline B}_1 = B_1 (B_1+B^T_1-A)^{-1} B^T_1.
\end{equation}

Notice that ${\overline B}_1$ is s.p.d. for any solver $B_1$ that is $A$-convergent (cf., e.g., Proposition 3.8 in \cite{Vas08}). 

The following estimate holds

\begin{equation}\label{estimate of norms}
    \|B\| \le \|{\overline B}_1\|.
\end{equation}

It follows from the fact that the term

\begin{equation*}
  (I-B^{-T}_1A) B^{-1}_0 (I-AB^{-1}_1)
\end{equation*}

in the definition \eqref{symmetric composition of two solvers} of $B^{-1}$ is positive semi-definite, which implies
$\bv^T B^{-1} \bv \ge \bv^T {\overline B}^{-1}_1 \bv$ for any $\bv$ and hence 
$\bv^T B \bv \le \bv^T {\overline B}_1 \bv$, and as a corollary $\|B\| \le \|{\overline B}_1\|$.

In conclusion, we have the following result for the properties of the symmetric composition of solvers.
\begin{lemma}\label{lemma: properties of composite solvers}
    If $B_0$ is s.p.d. and $B_0$ and $B_1$ are $A$-convergent solvers, then their composition defined in \eqref{symmetric composition of two solvers: iteration matrix} or equivalently, in \eqref{symmetric composition of two solvers}, is s.p.d. and $B$ is also $A$-convergent. Also, if the symmetrized solver ${\overline B}_1$ (see \eqref{symmetrized B_1}) satisfies 
    $\|{\overline B}_1\| \le c_0\|A\|$ for some constant $c_0 >0$, then the same inequality holds for $B$, i.e., $\|B\| \le c_0\|A\|$ (due to \eqref{estimate of norms}).
    Finally, if $B_1$ is s.p.d. and satisfies the inequalities
    $\bv^T B_1 \bv \ge \bv^T A \bv$ and $\|B_1\| \le c_0 \|A\|$, we have $\|B\| \le \|{\overline B}_1\| \le \|B_1\| \le c_0 \|A\|$.
    \end{lemma}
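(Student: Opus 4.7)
The plan is to verify the three assertions of the lemma in sequence, leaning entirely on the two identities \eqref{symmetric composition of two solvers: iteration matrix} and \eqref{symmetric composition of two solvers} together with the bound \eqref{estimate of norms} that was already derived in the paragraph above the lemma.

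First I would establish that $B$ is s.p.d.\ and $A$-convergent. For symmetry and positive definiteness, I read off \eqref{symmetric composition of two solvers} that $B^{-1} = \overline{B}_1^{-1} + X B_0^{-1} X^T$ with $X = I - B_1^{-T}A$ (the transpose identity uses $A = A^T$). Since $B_0$ is s.p.d., the term $X B_0^{-1} X^T$ is symmetric positive semi-definite, and $\overline{B}_1^{-1}$ is s.p.d.\ by the cited Proposition~3.8 of \cite{Vas08}; hence $B^{-1}$, and therefore $B$, is s.p.d. For $A$-convergence I would evaluate \eqref{symmetric composition of two solvers: iteration matrix} in the $A$-norm. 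A one-line computation with $\langle \cdot, \cdot \rangle_A$ shows that $I - B_1^{-T}A$ is the $A$-adjoint of $I - B_1^{-1}A$, so both have the same $A$-norm $\varrho_1 < 1$; submultiplicativity then gives $\|I - B^{-1}A\|_A \le \varrho_1^2 \varrho_0 < 1$, with $\varrho_0 < 1$ the $A$-norm bound of $I - B_0^{-1}A$.

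The second assertion, $\|B\| \le c_0\|A\|$ under $\|\overline{B}_1\| \le c_0\|A\|$, is immediate from the already proved bound \eqref{estimate of norms}.

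For the last assertion I would show the refined Loewner inequality $\overline{B}_1 \le B_1$ whenever $B_1$ is s.p.d.\ with $B_1 \ge A$. Since $B_1 = B_1^T$, the symmetrization formula \eqref{symmetrized B_1} collapses to $\overline{B}_1 = B_1(2B_1 - A)^{-1} B_1$, and inverting gives $\overline{B}_1^{-1} = 2B_1^{-1} - B_1^{-1} A B_1^{-1}$; subtracting $B_1^{-1}$ yields the clean identity $\overline{B}_1^{-1} - B_1^{-1} = B_1^{-1}(B_1 - A) B_1^{-1}$, which is positive semi-definite by hypothesis. Hence $\overline{B}_1^{-1} \ge B_1^{-1}$, so $\overline{B}_1 \le B_1$ and consequently $\|\overline{B}_1\| \le \|B_1\|$. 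Combining with \eqref{estimate of norms} produces the stated chain $\|B\| \le \|\overline{B}_1\| \le \|B_1\| \le c_0\|A\|$. I do not expect a real obstacle; the only delicate point is the explicit identification of the $A$-adjoint in the convergence step, which is what legitimizes passing from the operator-norm submultiplicativity to a strict $A$-norm contraction bound.
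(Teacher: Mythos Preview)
Your proposal is correct and follows essentially the same line as the paper. The paper's proof treats the first two assertions as already established by the discussion preceding the lemma and only argues the last inequality $\overline{B}_1 \le B_1$: setting $X = 2B_1 - A$, it notes $X - B_1 = B_1 - A \ge 0$, hence $X^{-1} \le B_1^{-1}$ and $\overline{B}_1 = B_1 X^{-1} B_1 \le B_1 B_1^{-1} B_1 = B_1$. Your computation of $\overline{B}_1^{-1} - B_1^{-1} = B_1^{-1}(B_1 - A)B_1^{-1} \ge 0$ is the same argument read through the inverses, and your explicit verification of the s.p.d.\ and $A$-convergence claims simply spells out what the paper leaves to the preceding text.
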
 
\begin{proof}
The only thing that remains to be shown is that if $B_1-A$ is symmetric positive semi-definite that $B_1-{\overline B}_1$ is symmetric positive semidefinite.
The latter is immediately seen from the fact that $X \equiv B^T_1+B_1-A = 2B_1 - A$, hence $X-B_1=B_1-A$ is symmetric positive semidefinite.
Therefore $$\bv^T {\overline B_1} \bv = \bv^T B_1 X^{-1} B_1 \bv \le \bv^T B_1 B^{-1}_1 B_1 \bv =\bv^T B_1 \bv.$$
\end{proof}

\section{Detecting slow to converge error components in iterative methods}\label{section: Characterization of near null components}
Given a $n \times n$ s.p.d. matrix $A$, starting with a convergent (in the $A$-norm) iterative method such as scaled Jacobi, the $\ell_1$ diagonal preconditioner, or (symmetric) Gauss-Seidel,
or a multilevel one, all  denoted by $B$, we test its convergence properties. 
In order to test their convergence quality, we consider the trivial equation
$ A \bx = 0$,
and starting with a nonzero (random) initial iterate 
$\bx_0$, for $k=1,2,\dots,$  we solve for $\bx_k$ 
\begin{equation}\label{iteration with B}
B (\bx_k -\bx_{k-1}) = - A \bx_{k-1},
\end{equation}
 until a stall occurs, i.e., for $k \ge m$, we have 
\begin{equation*}
\frac{\|\bx_k\|_A}{\|\bx_{k-1}\|_A} \ge 0.999...
\end{equation*}
We note that the vectors $-\bx_k$ are actually the errors (since the exact solution is $\bx=0$). 
The occurrence of a stall is an indication that the current iteration method based on $B$ is not able to reduce certain components of the error, namely the ones dominating the current iterate $\bx_k$. 
In other words, the  vector $\bw = \frac{\bx_m}{\|\bx_m\|_A}$ is such that
$B^{-1}A \bw \approx 0$, i.e., $\bw$ is spanned by eigenmodes of $B^{-1}A$ in the lower portion of the spectrum of 
$B^{-1}A$.

We have the following main result of the present paper characterizing solvers $B$ that can actually lead to vectors $\bw$  that are near-null components of $A$ (and not only of $B^{-1}A$). The latter is important since in the coarsening process we have access to the entries of $A$ and $\bw$ (and not necessarily to $B^{-1}$). 
Those include standard smoothers (Gauss-Seidel, scaled Jacobi, $\ell_1$) and also $\mu$-cycles utilizing these smoothers. 
\begin{theorem}\label{theorem: near null component}
Let $A$ be a $n \times n$ s.p.d. matrix. 
Let $B$ be symmetric positive definite such that 
\begin{equation*}
\bv^T A \bv \le \bv^T B \bv \text{ for any }
\bv \in {\mathbb R}^n,
\end{equation*}
so that $B$ defines an $A$-convergent iteration method  for solving $A \bx = \bb$.
We assume that $\|B\| \simeq \|A\|$, i.e., 
\begin{equation}\label{norm equivalence}
\|B\| \le c_0 \|A\| \text{ for a constant }c_0 \ge 1. 
\end{equation}
Consider any vector $\bw$ such that the iteration process \eqref{iteration with B}
with $B$ stalls for it, i.e.,
\begin{equation}\label{stall inequality}
 1 \ge \frac{\|(I- B^{-1}A)\bw\|^2_A}{\|\bw\|^2_A} \ge 1-\delta,
\end{equation}
 for some small $\delta \in (0,1)$. Then, the following estimate holds $\|A\bw\|^2 \le c_0 \|A\|\;\delta \|\bw\|^2_A.$
\end{theorem}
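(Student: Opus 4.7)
The plan is to expand $\|(I-B^{-1}A)\bw\|^2_A$, rearrange into an identity in the auxiliary vector $\by := B^{-1}A\bw$, and then use the two structural assumptions $A \le B$ and $\|B\| \le c_0\|A\|$ separately to pass from the stall bound on that identity to the desired bound on $\|A\bw\|^2$.

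First, since $A$ and $B$ are symmetric, $(I-B^{-1}A)^T A (I-B^{-1}A) = A - 2AB^{-1}A + AB^{-1}AB^{-1}A$. Subtracting from $\|\bw\|^2_A$ and using $A\bw = B\by$ I would obtain the clean identity
\begin{equation*}
\|\bw\|^2_A - \|(I-B^{-1}A)\bw\|^2_A \;=\; 2(A\bw)^T B^{-1}(A\bw) - (A\bw)^T B^{-1} A B^{-1}(A\bw) \;=\; \by^T(2B-A)\by.
\end{equation*}
The stall assumption \eqref{stall inequality} rewrites as an upper bound on the left-hand side by $\delta\|\bw\|^2_A$, giving
\begin{equation*}
\by^T(2B-A)\by \;\le\; \delta\,\|\bw\|^2_A.
\end{equation*}

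Next I would invoke the hypothesis $\bv^T A \bv \le \bv^T B \bv$ to get $2B - A \ge B$ in the sense of symmetric forms, which turns the previous line into
\begin{equation*}
\by^T B \by \;\le\; \delta\,\|\bw\|^2_A.
\end{equation*}
Finally, since $A\bw = B\by$, I have $\|A\bw\|^2 = \by^T B^2 \by \le \|B\|\, \by^T B \by$, and applying the norm equivalence \eqref{norm equivalence} yields
\begin{equation*}
\|A\bw\|^2 \;\le\; \|B\|\,\delta\,\|\bw\|^2_A \;\le\; c_0\|A\|\,\delta\,\|\bw\|^2_A,
\end{equation*}
which is the claim.

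The only step that requires any care is the algebraic identity in the first paragraph: once $\by = B^{-1}A\bw$ is introduced and the symmetry of $A$ and $B$ is used to symmetrize $(I-B^{-1}A)$, the rest is a two-line application of the hypotheses. I do not anticipate a real obstacle; the content of the theorem is precisely that $A \le B$ controls the $B^{-1}$-weighted norm of $A\bw$ by the stall gap, and $\|B\| \le c_0 \|A\|$ then lifts this to a bound on the Euclidean norm $\|A\bw\|$.
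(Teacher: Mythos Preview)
Your proof is correct and is essentially the same argument as the paper's: introducing $\by = B^{-1}A\bw$, your identity $\by^T(2B-A)\by \le \delta\|\bw\|^2_A$ is exactly the paper's $\bw^T AB^{-1}A\bw + \bw^T AB^{-1}(B-A)B^{-1}A\bw \le \delta\|\bw\|^2_A$, and your step $2B-A \ge B$ is the paper's dropping of the nonnegative $(B-A)$-term, after which both finish with $\|A\bw\|^2 \le \|B\|\,\bw^T AB^{-1}A\bw$ and \eqref{norm equivalence}.
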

In other words, if we scale $A:\; \|A\|=1$ and $\bw:\; \|\bw\|_A = 1$,  then we have $\|A\bw\|_{\max} \le \|A\bw\| \le \sqrt{c_0 \delta} = \O(\sqrt{\delta})$, i.e., $A \bw \approx 0$ componentwise. 
That is, vectors for which the iteration process with $B$ stalls are in the near-nullspace of $A$ commonly referred to as {\em algebraically smooth} vectors generated by the solver $B$. 
\begin{proof}
From the inequality $(1-\delta) \|\bw\|^2_A \le \|(I-B^{-1}A) \bw\|^2_A$ we obtain
\begin{equation*}
\begin{array}{rl}
(1-\delta) \|\bw\|^2_A & \le \left ((I-B^{-1}A) \bw\right )^T A \left ((I-B^{-1}A) \bw\right )\\
& = \bw^TA \bw + \bw^TA B^{-1}  A  B^{-1} A \bw -2 \bw^T A B^{-1}A \bw\\
&= \bw^TA \bw  - \bw^T A B^{-1} A \bw - \bw^T (A B^{-1}A - A B^{-1}  A  B^{-1} A) \bw\\
&= \|\bw\|^2_A - \bw^T A B^{-1} A \bw - \bw^T AB^{-1} (B-A) B^{-1}A \bw\\
\end{array}
\end{equation*}
That is, we have
\begin{equation*}
\bw^T A B^{-1} A \bw + \bw^T AB^{-1} (B-A) B^{-1}A \bw \le \delta \|\bw\|^2_A.
\end{equation*}
Now, since $B-A$ is symmetric positive semi-definite, and using the inequality 
$$\frac{1}{\|B\|} \|A\bw\|^2 \le \bw^T A B^{-1} A \bw,$$  we arrive at
\begin{equation*}
\frac{1}{\|B\|} \|A\bw\|^2 \le \delta \|\bw\|^2_A.
\end{equation*}
Finally, using the assumption $\|B\| \le c_0\|A\|$ the final result follows 
$$ \|A\bw\|^2 \le c_0 \delta \|A\|\|\bw\|^2_A.$$

\end{proof}
\begin{proposition}\label{proposition: solvers satisfying norm equivalence condition}
Here, we list $A$-convergent s.p.d. solvers $B$ that satisfy the main assumption \eqref{norm equivalence}.
\begin{itemize}
    \item [(i)] "Standard"  smoothers: symmetric Gauss-Seidel and the $\ell_1$-smoother.
    \item [(ii)] Symmetric MG cycles using the above smoothers (i).
    \item [(iii)] Compositions of solvers $B_0$ and $B_1$  as studied in Lemma~\ref{lemma: properties of composite solvers}.  In particular, the composition of solvers $B_{composite}$  as described in the following section satisfies \eqref{norm equivalence}, which justifies the adaptive process in Section~\ref{section: the adaptive process}. 
\end{itemize}
\end{proposition}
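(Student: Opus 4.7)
The plan is to verify the three items separately, in reverse order of difficulty. Item (iii) is essentially a repackaging of the last conclusion of Lemma~\ref{lemma: properties of composite solvers}; item (ii) follows from (iii) by induction on the number of levels; and item (i) is the base case requiring a direct entrywise argument that exploits the bounded sparsity of $A$ assumed throughout the AMG setting.

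For item (i), I would handle the two smoothers separately. The $\ell_1$-smoother $B$ is diagonal with $b_{ii}=\sum_j |a_{ij}|$, so $\|B\|=\max_i b_{ii}$. Using the trivial bound $|a_{ij}|=|e_i^T A e_j|\le \|A\|$ together with at most $m$ nonzeros per row of $A$ yields $b_{ii}\le m\|A\|$, giving $c_0=m$. For symmetric Gauss--Seidel with $A=D+L+L^T$, the identity $B_{SGS}=A+LD^{-1}L^T$ reduces the task to bounding $\|LD^{-1}L^T\|$ in terms of $\|A\|$, which is a standard Gerschgorin-type estimate on sparse s.p.d. matrices under mild scaling of $D$. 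I would cite this as a well-known property of the classical smoothers rather than grinding through the entries.

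For item (ii), I would interpret a symmetric $\mu$-cycle as an instance of the composition \eqref{symmetric composition of two solvers: iteration matrix}, with $B_1$ a pre/post smoother from (i) and the middle factor $B_0$ corresponding to the recursive coarse-level solver lifted through the interpolation operator. Induction on the number of levels shows that $B_0$ is s.p.d. and $A$-convergent at the current level (the base case being a direct solve or a single smoother from (i)). Since $B_1$ is s.p.d. with $\bv^T A\bv\le \bv^T B_1\bv$ and $\|B_1\|\le c_0\|A\|$ by (i), the last conclusion of Lemma~\ref{lemma: properties of composite solvers} immediately gives $\|B\|\le \|B_1\|\le c_0\|A\|$, with $c_0$ inherited from the smoother. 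Item (iii) then follows by the identical application of the same lemma to the composite solver $B_{composite}$ constructed in Section~\ref{section: the adaptive process}.

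The main obstacle I anticipate is verifying the hypothesis $\bv^T A\bv\le \bv^T B_1\bv$ for the adaptively constructed aggregation-based AMG components $B_1=B(\bw)$ of Section~\ref{section: modularity based AMG}: this is a two-level spectral equivalence that typically requires a local analysis of the prolongators produced by the modularity-matching coarsening, in the spirit of standard SA-AMG theory, or should be imposed as a design property on the aAMG building block. Once this hypothesis is in place, the recursion closes and the norm bound propagates to $B_{composite}$, which is what justifies the adaptive loop of Section~\ref{section: the adaptive process} via Theorem~\ref{theorem: near null component}.
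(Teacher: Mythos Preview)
Your approach is essentially the paper's: item (i) is cited as well-known (spectral equivalence to $\mathrm{diag}(A)$ under bounded row sparsity), and items (ii) and (iii) are reduced to Lemma~\ref{lemma: properties of composite solvers}. Your added detail for (i) and the explicit induction for (ii) are fine elaborations of what the paper states in one line.

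The ``obstacle'' you anticipate at the end is not one. The inequality $\bv^T A\bv \le \bv^T B_1\bv$ for the adaptively constructed components $B_1=B(\bw)$ does \emph{not} require any local analysis of the modularity-based prolongators: it is a structural consequence of $B_1$ being a symmetric $\mu$-cycle with smoothers $M$ satisfying $M\ge A$. Indeed, conjugating the iteration matrix by $A^{1/2}$ shows (by induction on levels, using that the coarse correction factor is positive semidefinite whenever the coarse solver itself dominates $A_c$) that the symmetrized MG iteration matrix is positive semidefinite, hence the eigenvalues of $B_1^{-1}A$ lie in $(0,1]$ regardless of how good or bad the interpolation is. So once (i) and (ii) are in hand, the hypothesis you worry about is already verified for every $B_k$ in the composite, and (iii) closes with no further design assumption.
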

\begin{proof}
The result in item (i) is well-known (e.g., \cite{Vas08}) and  follows from the fact that these smoothers are spectrally equivalent to the diagonal of the given matrix $A$ (valid for any sparse s.p.d. matrix with bounded number of non-zero entries per row). 
Items (ii) and (iii) follow from Lemma~\ref{lemma: properties of composite solvers}.
    \end{proof}

\section{The adaptive process}\label{section: the adaptive process}
Once the new solver $B$, 
\eqref{symmetric composition of two solvers}, is created, we repeat the procedure, i.e., we test if the new solver has good convergence or not. If it does not, we construct its near-null component vector and create a corresponding hierarchy and respective $\mu$-cycle solver that removes that vector from the error. 
Then, we make a new composite solver from the most recent one and the last constructed $\mu$-cycle one.
This process is repeated successively until a solver with a satisfactory convergence is built. It is clear that we cannot have more than $n$ such steps (where $n$ is the size of the original matrix $A$).
This strategy resembles variational iterative methods, where at each iteration we construct a search direction that is orthogonal to the previous search directions, hence  we cannot have more than $n$ orthogonal search directions. 
Given $A \bx = \bb$, and a desired convergence factor $\varrho \in (0,1)$, Algorithm \ref{alg:adaptive-ml} constructs a sequence of solvers, $B_0, B_1,\; \dots, B_m$, for  some $m \ge 0$, which symmetric composition, $B_{composite}$, leads to an iteration matrix 
\begin{equation*}
\begin{array}{rl}
E_{composite} &= I - B^{-1}_{composite}A\\
&= (I-B^{-T}_m A) \dots (I-B^{-T}_1 A) (I- B^{-1}_0 A) (I-B^{-1}_1 A) \dots (I-B^{-1}_m A),
\end{array}
\end{equation*}
for which $\|E_{composite}\|_A \le \varrho$.

\begin{algorithm}
  \caption{Constructing a Solver with Desired Convergence Factor}
  \label{alg:adaptive-ml}
  \begin{algorithmic}[1]
    \Require $A\in\mathbb{R}^{n\times n}$ s.p.d.\ matrix;
             $B_0$ ($A$–convergent s.p.d.\ pre-solver);
             $m$ (tester iterations);
             $\varrho\in(0,1)$ (target factor)
    \Ensure Solver $B$ with convergence factor $\le \varrho$
    \State $B \gets B_0$
    \State $\varrho_B \gets 1$
    \For{$k \gets 0,1,\dots$ \textbf{until} $\varrho_B \le \varrho$}
      \State $\bx_0 \gets$ random vector
      \For{$s \gets 1,2,\dots,m$}
        \State $\bx_s \gets (I-B^{-1}A)\bx_{s-1}$
      \EndFor
      \State $\varrho_B \gets \lVert\bx_m\rVert_A \big/ \lVert\bx_{m-1}\rVert_A$
      \State $\bw \gets \bx_m / \lVert\bx_m\rVert$
      \State Construct $B_k$ and $B_k^\top$ using $\bw$ (Sec.~\ref{section: modularity based AMG})
      \State $B \gets$ symmetric composition of $B$ and $B_k$ (Eq.~\ref{symmetric composition of two solvers})
    \EndFor
    \State \Return $B$
  \end{algorithmic}
\end{algorithm}

\section{Modularity coarsening based AMG}\label{section: modularity based AMG}
Given a graph $G$ with a vertex set $\N=\{1,2,\;\dots,\;n\}$ and an edge set $\E \subset \N \times \N$ of undirected pairs of vertices $e=(i,j)$. We assume that we have given weights $w_e$ assigned to each edge $e$. In this setting, there is a one-to-one mapping between weighed graphs and symmetric sparse matrices $A = (a_{ij})^n_{i,j=1}$. The edges of such a graph are the pairs $(i,j)$ for which $a_{i,j} \ne 0$. We can also let $w_e = a_{i,j}$ accepting that some ``weights'' can be negative.

In what follows, we assume that the rowsums of $A$ are positive, i.e., 

\begin{equation}\label{weighetd vertex degree}
r_i =\sum\limits_j a_{i,j} > 0.
\end{equation}
Let $\br = (r_i)^n_{i=1} \in {\mathbb R}^n$ and $\bone = (1) \in {\mathbb R}^n$ be the unity constant vector. 
We have, $\br = A \bone$
Finally, let $T = \sum\limits_i r_i = \bone^T A \bone$ be the total rowsum of $A$.
The following matrix is referred to as the {\em modularity matrix} \cite{Newman}
\begin{equation*}
    B = A - \frac{1}{T}\br \br^T.
\end{equation*}
By construction, we have that
\begin{equation}\label{zero row sums of B}
B\bone =0.
\end{equation}
Based on its entries $b_{ij} = a_{ij} - \frac{r_ir_j}{T}$ one defines the so-called modularity functional (cf., \cite{Newman}) associated with a partitioning  of the vertex set into non-overlapping groups $\{\A\}$ of vertices, referred as to clusters, communities, and in the AMG setting as to aggregates.

The modularity functional reads
\begin{equation*}
    Q = \frac{1}{T}\sum\limits_\A \sum\limits_{i,j \in \A} b_{ij}
    = \frac{1}{T}\sum\limits_\A \sum\limits_{i,j \in \A} \left(a_{ij} - \frac{r_ir_j}{T}\right).
\end{equation*}

Different partitioning (sets of aggregates) gives different values for $Q$. The ones that lead to larger values of $Q$ are viewed to be better in terms of detecting {\em  community} structures in graphs (in social networks in particular). The functional $Q$ gives a measure how strongly are the vertices in a community connected versus the connections across the communities. The strength of connectivity is also a concept adopted in the  classical AMG methods (e.g., \cite{amg1, amg2, Ruge_Stuben_87__MR972756}), which motivated us to adopt it in our coarsening algorithm for general matrices. 
More specifically, in our coarsening strategy we employed the entries $b_{ij}$ of the modularity matrix as edge weights to measure if two vertices are strongly connected. This concept we have preliminary studied in \cite{Quiring2019} and showed that it allowed to detect strong anisotropy. Many other options for edge weights are possible and have been utilized in the past with some success to detect strong anisotropy for example (cf., \cite{smoothed_aggr_MR1393006, N-10, DV2013, DFV2018}) and there is not an ultimate answer as to which one is preferred.  

We note that in the actual coarsening only the positive entries $b_{ij}$ matter. Since, by assumption, we consider matrices with positive rowsums $r_i$, the latter means that for each vertex $i$, there is at least one adjacent vertex $j$ for which $a_{ij} > 0$. Since $b_{ij} < a_{ij}$
then $b_{ij}$ can be positive only for vertices $i,j$ that correspond to the sparsity graph of $A$. Finally, since $B \bone =0$ (equation \ref{zero row sums of B}), i.e., $B$ has zero rowsums, this shows that for each $i$ there is at least one  adjacent vertex $j$ for which $b_{ij} \ge 0$, that is at least one edge $e =(i,j)$ associated with $i$ has a positive modularity weight $b_{ij}$. 

In the actual coarsening we implemented, we adopted a version of the parallel Luby's weighted  matching algorithm (\cite{Luby1986, JonesPlassmann1993}) that creates a set $\M$ of pairwise matching (each viewed as a set of two vertices). Namely, given an edge $e$ and its immediate neighbors $e'$ (i.e., ones that share a common vertex with $e$), we compare their edge weights $w_{e'}$ with the weight $w_e$ of $e$. In our case, we have $w_e = b_{ij}, \; e=(i,j)$.
More specifically, if
\begin{equation*}
w_e > \max\limits_{e{'} \ne e,\; e{'} \text{ neighbor of }e} w_{e'},
\end{equation*}
we choose $e$ to be a member of the matching set $\M$. 
This decision is completely parallel (it does not interfere with the decision made for any other edge). 
The aggregates that we construct by the above decision, are pairwise ones, the edges from $\M$ that have locally maximal edge weights, plus the single vertex aggregates, namely the vertices that have not been matched.
This gives a piecewise constant interpolation matrix $P$, which represents the relation {\em vertex }$i$ is contained in aggregate $i_c$, i.e., the 0-1 matrix with rows corresponding to vertices and columns to aggregates with non-zero entries $1$ at position $(i,i_c)$ if vertex $i$ is contained in aggregate $i_c$. 
Then we perform a coarsening step by forming the matrix $B_c = P^T BP$.
This is done by first computing $A_c = P^TAP$ ( a sparse matrix) and 
$b_c= P^T \br$. Then, $B_c= P^TBP = A_c - \frac{1}{T}\br_c \br^T_c$.
If we introduce the coarse unity vector $\bone_c= (1) \in {\mathbb R}^{n_c}$, it is easily seen that $B_c \bone_c = 0$, i.e., it also has the 
zero row-sum property. This is the case since $\bone = P \bone_c$ (i.e., $P$ interpolates the coarse constant vector to the fine constant vector).
This is the case since each vertex $i$ belongs to unique aggregate, that is, there is only one nonzero entry (equal to 1) in each row of $P$.

That is, $B_c$ is actually, the modularity matrix of the coarse graph corresponding to the relation  {\em aggregate}\_{\em aggregate}. The latter means, two aggregates are related if there is an edge $e = (i,j)$ such that $i$ is in one of the aggregates and $j$ is in the other aggregate.
In other words, the coarse graph (with vertices from the aggregates and edges from the pairs of related aggregates) corresponds to the sparsity pattern of the coarse matrix $A_c = P^TAP$. 

One final detail is that one step of parallel matching may not be sufficient to get a reasonable coarsening factor. To achieve a desired coarsening factor, we apply recursion, i.e., Luby's algorithm to the created (intermediate) coarse graphs. 

More details about the modularity matrices and coarsening are summarized in the report \cite{Quiring2019}.

Our main observation in order to utilize the near-null component vector $\bw$ that we compute during the composite adaptive process applied to a general s.p.d. matrix $A$ and using modularity weights is as follows.
Since $A \bw \approx 0$ componentwise by construction, we have for each $i$
\begin{equation*}
    0 \approx w_i  \sum\limits_j a_{ij} w_j,
    \end{equation*}
    or equivalently 
    \begin{equation*}
    0 \le a_{ii} w^2_i \approx \sum\limits_{j\ne i} (-w_i a_{ij} w_j).
    \end{equation*}
That is, the following auxiliary matrix (adjacency matrix of the strength of connection graph) ${\overline A} =({\overline a}_{ij})$ that has non-zero entries ${\overline a}_{ij} = - w_i a_{ij} w_j$ has the positive row-sum  property, for which we use the modularity weights in our coarsening process to create the hierarchy of aggregates and respective $P$ matrices.

\subsection{SA-AMG}
Once the aggregation is obtained in the form of a node-to-aggregate matrix $P$ we can proceed to create a component in the standard Smoothed Aggregation AMG (SA-AMG) fasion \cite{disney_cloth, sa_amg}. Algorithm \ref{alg:adaptive-ml} can be slightly modified to search for a near-null space of desired dimension (instead of a single vector $\bw$) by solving for multiple random starting guess simultaneously and occasionally orthogonalizing them with the modified Gram Schmidt process. The resulting near-null basis acts as the candidate vectors to be used in the SA-AMG. The only other modification needed is in the modularity aggregation algorithm we now start with aggregates corresponding to the finer level's near-null space dimension instead of singletons. We use weighted Jacobi (or block Jacobi in the multiple candidate scenario) with a smoothing weight of $2/3$ to generate the smooth interpolation operators: $\widetilde P := (I - 0.66 D^{-1} A) P$. Then the coarse level is generated by the standard Galerkin assembly $A_c = \widetilde P^\top A \widetilde P$.

\section{Numerical tests}\label{section: experiments}
Various types of numerical experiments are detailed in this section. First, we verify the claim that the adaptive process (Algorithm \ref{alg:adaptive-ml}) can identify useful near-null components. A well known approach for preconditioning linear elasticity discretizations is to use the rigid body modes as SA-AMG candidate vectors. Experiment \ref{exp:elast-rbms} demonstrates the recovery of these modes. Experiments \ref{exp:anisotropy} and \ref{exp:spe10} apply the composite adaptive methods to diffusion problems on 2D and 3D domains. The first is a study with an anisotropic coefficient and the second with a coefficient from the SPE10 \cite{spe10} benchmark permeability data with extreme heterogeneity ($>10^{9}$ permeability ratios on adjacent cells). Finally, we include a test for a non-PDE sparse matrix from the suitespare \cite{suitesparse} collection called \verb|G3_circuit| submitted by Advanced Micro Devices, which comes from a circuit simulation problem.

All convergence plots are paired with at least one table with details about the associated matrices, solver parameters, and resulting solver. Table \ref{tab:results_notation} contains abbreviations and definitions needed to understand these results.

\renewcommand{\arraystretch}{1.2}
\begin{table}[htbp]
  \centering
  \caption{Notation used in convergence plots and associated tables.}
  \begin{tabular}{@{}>{\centering\arraybackslash}p{2cm}p{10cm}@{}}
    Abbreviation & Definition / description \\ \hline
    $k$                  & Number of components in the composite solver. \\[2pt]

    nnz                  & Number of nonzeros. \\[2pt]

    DoFs                 & Degrees of Freedom (length of solution vector including fixed boundary values). \\[2pt]

    $h$                  & Number of geometric mesh refinements. \\[2pt]

    $\mu$                & Cycle parameter: \(\mu=1\) (V-cycle), \(\mu=2\) (W-cycle),
                           \(\mu>2\) (generalised W-cycles). \\[6pt]

    $\nu$                & \textbf{Smoothing steps} (total pre- + post-relaxations per level). \\[2pt]

    $\gamma$             & \textbf{Coarsening factor}: ratio of unknowns on adjacent grids. \\[4pt]

    $N_{\mathrm{SA}}$    & \textbf{SA candidates}: number of vectors used to build the
                           smoothed-aggregation prolongation operator. \\[2pt]

    $C_k$                & Average \emph{operator complexity} per component:
                           \(\displaystyle
                             C_k = \tfrac{1}{k} \cdot \tfrac{\text{nonzeros in all hierarchies}}{\text{nonzeros in fine matrix}}
                           \). \\[6pt]

    $\br^{(i)}$            & Residual after the \(i^{\text{th}}\) composite iteration:
                           \(r^{(i)} = \mathbf{w} - A\,\mathbf{x}^{(i)}\). \\[4pt]

    $\dfrac{\|\br^{(i)}\|}{\|\br^{(0)}\|}$  
                         & Relative residual at iteration \(i\)
                           (or after \((2k-1)i\) cycles). \\[6pt]

    $\rho$               & Asymptotic convergence factor per cycle,
                           \(\displaystyle
                             \rho = \left(
                               \frac{\|\br^{(i)}\|}{\|\br^{(i-1)}\|}
                             \right)^{\!\!\frac{1}{2k-1}}
                           \).  \\[4pt]

    Stat.                & $\mu$-cycles required for a composite operator to reach $10^{-12}$ relative residual as a stationary iteration \\[2pt]

    PCG                  & $\mu$-cycles required for a composite operator to reach $10^{-12}$ relative residual as a preconditioner for conjugate gradient\\[2pt]

    \multicolumn{2}{@{}p{\dimexpr2cm+11cm}@{}}{\small
      \emph{Notes about convergence plot tables:
      Triangles mark true composite iterations (\(2k-1\) cycles each). Left panels show stationary and right panels show PCG convergence. Each row down is a new refinement level.}} \\
  \end{tabular}
  \label{tab:results_notation}
\end{table}

\subsection{Recovering RBM for Linear Elasticity}
\label{exp:elast-rbms}
A common usage of SA-AMG is to utilize the rigid body modes (RBM) of the linear elasticity PDE as the candidate vectors. A reasonable goal for our adaptive algorithm would be to either recover these modes as candidates or construct a multigrid operator with better convergence than the SA-AMG with RBM candidates. 

Let $\{m_i\}_{i=1}^6$ be the $6$ normalized RBMs (XY, YZ, ZX rotations and X, Y, Z translations) associated with a 3d linear elasticity PDE and let $Q$ be an orthonormal basis for these modes. Also let $W$ be an orthonormal basis for a set of SA-AMG candidate vectors computed through Algorithm \ref{alg:adaptive-ml}. We score how much of the RBM space is covered by the candidate space with a value from $0$ to $1$ with 
$$\frac{\|Q^\top W\|_{\star}}{6}$$
where $\|\cdot\|_\star$ is the nuclear norm. We also score each individual RBM by projecting into the candidate space with $\|W^\top m_i\|_2$.

\begin{figure}[htbp]
  \centering 
  \textbf{Recovery of the Rigid Body Modes}
    \begin{tabular}{cc|ccc|ccc}
        $N_{\mathrm{SA}}$ & \textbf{Score} & \multicolumn{3}{c|}{\textbf{Rotations}} & \multicolumn{3}{c}{\textbf{Translations}} \\
        & & \textbf{XY} & \textbf{YZ} & \textbf{ZX} & \textbf{X} & \textbf{Y} & \textbf{Z} \\
        \hline
         3 & 0.46 & 0.85 & 0.63 & 0.86 & 0.85 & 0.85 & 0.79 \\
         6 & 0.80 & 0.99 & 0.91 & 0.99 & 0.80 & 0.91 & 0.90 \\
        10 & 0.89 & 1.00 & 0.95 & 1.00 & 0.91 & 0.94 & 0.94 \\
        20 & 0.96 & 1.00 & 0.97 & 1.00 & 0.98 & 0.98 & 0.98
    \end{tabular}
\label{tab:rbm-recovery}
\end{figure}

By applying the adaptivity algorithm, we find that we do `discover' the space spanned by the RBMs in the finest level adaptive SA-AMG candidates. Table \ref{tab:rbm-recovery} provides how much overlap there is between the RBM space and the span of the adaptive candidate when testing on \href{https://mfem.org/examples/}{example 2 code from MFEM, \cite{mfem}} using the `beam\_tet' mesh and 2 refinements ($2475$ DoFs with $72933$ nnz).

\subsection{Anisotropic Diffusion}
\label{exp:anisotropy}
An important class of numerical PDE problems which are challenging to solve with standard multigrid is anisotropic diffusion. A typical approach is to use line smoothing and semi-coarsening only in the direction of the anisotropy; but this generally only works when the direction is known and the domain can be discretized in a structured way to align with it. In this section, we demonstrate that the composite adaptive process is robust to discretized anisotropic diffusion problems posed on 2d and  3d domains $\Omega$.

We use a simple but non-trivial 2d problem to demonstrate how the adaptive procedure mimics some of the techniques such as line smoothing and semi-coarsening on unstructured problems in a purely algebraic way. For both the 2d and 3d problems, the PDE we solve is the anisotropic diffusion problem with homogeneous Dirichlet conditions, stated as follows:
\begin{equation} \label{eq:anis_dif}
    \begin{cases} 
	    -\nabla \cdot (K \nabla u) = 1, & \text{in}\; \Omega\\ 
	    u = 0, & \text{on}\; \partial \Omega
    \end{cases}
\end{equation}
where $K = \epsilon I + \bbeta \bbeta^T$ with 
$$\begin{cases} 
	\bbeta=\begin{bmatrix} \cos \theta \\ \sin \theta \end{bmatrix} & \text{when}\; \Omega \subset \mathbb{R}^2\\
		\bbeta=\begin{bmatrix} \cos \theta \cos \phi \\ \sin \theta \cos \phi \\ \sin \phi \end{bmatrix} & \text{when}\; \Omega \subset \mathbb{R}^3
\end{cases}$$ 
is the dominant direction of the diffusion tensor. We use $\epsilon=10^{-6}$ in the following tests.

These examples were assembled using MFEM using the lowest order $H^1$-conforming finite element space. For the first example, we use the 2d `star.mesh' (MFEM sample mesh, \cite{mfem}), geometrically refined until the matrix has $1,313,281$ DoFs.

First, we verify the results from \cite{Quiring2019}, demonstrating the modularity matching algorithm we use for partitioning effectively identifies the direction of anisotropy and creates aggregation based hierarchies which align with it. When using a linear continuous Galerkin finite element discretization ($p=1$), each DoF in the assembled matrix is associated with a vertex in the mesh of the geometry.

\begin{figure}[htbp]
  \centering
  \textbf{Algebraically Smooth Error and Resulting Aggregation}
  \setlength{\tabcolsep}{0.5pt}
  \begin{tabular}{cc}
    \includegraphics[trim={14cm 0 12cm 0},clip,width=.49\textwidth]{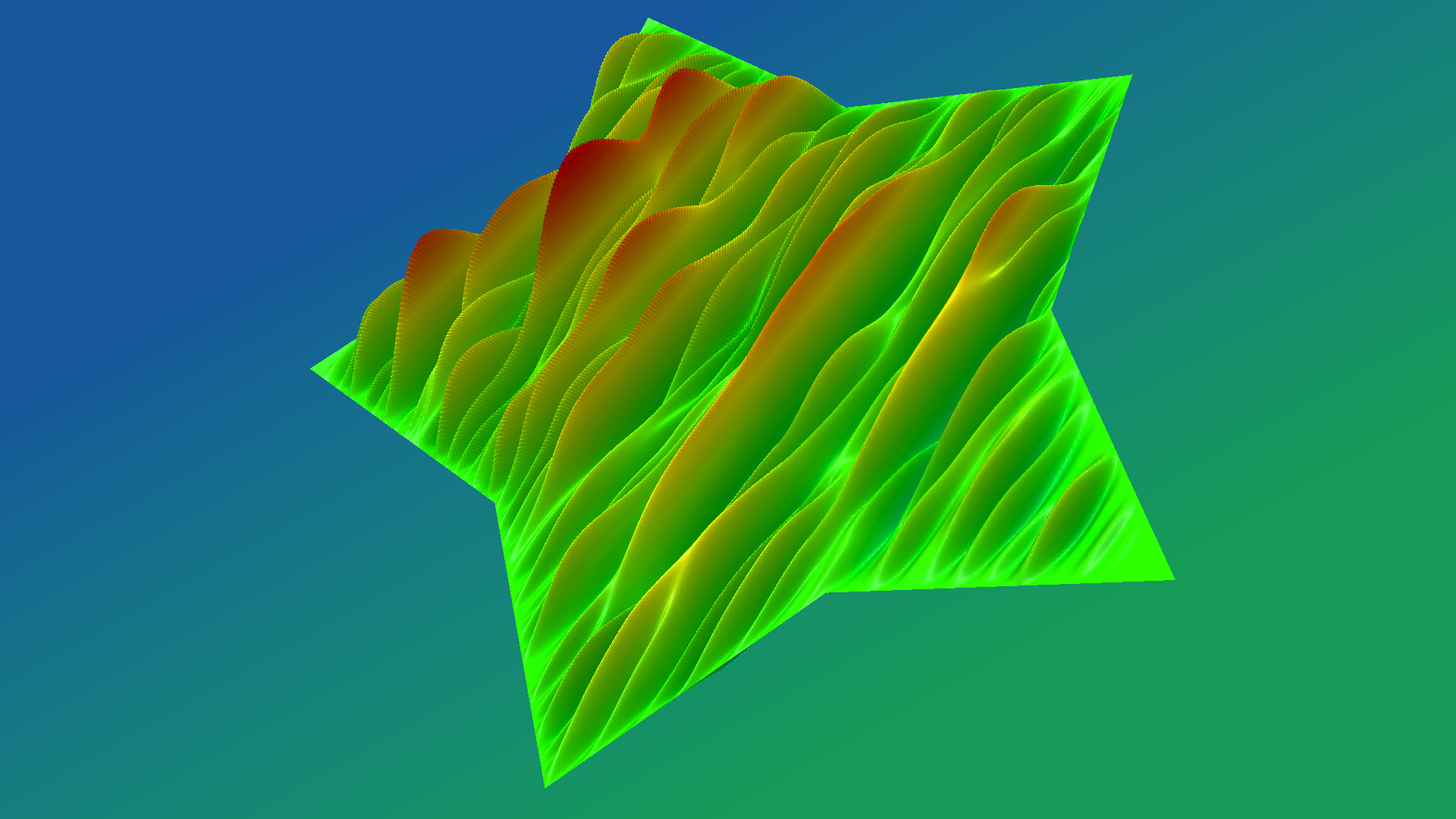} &
    \includegraphics[width=.49\textwidth]{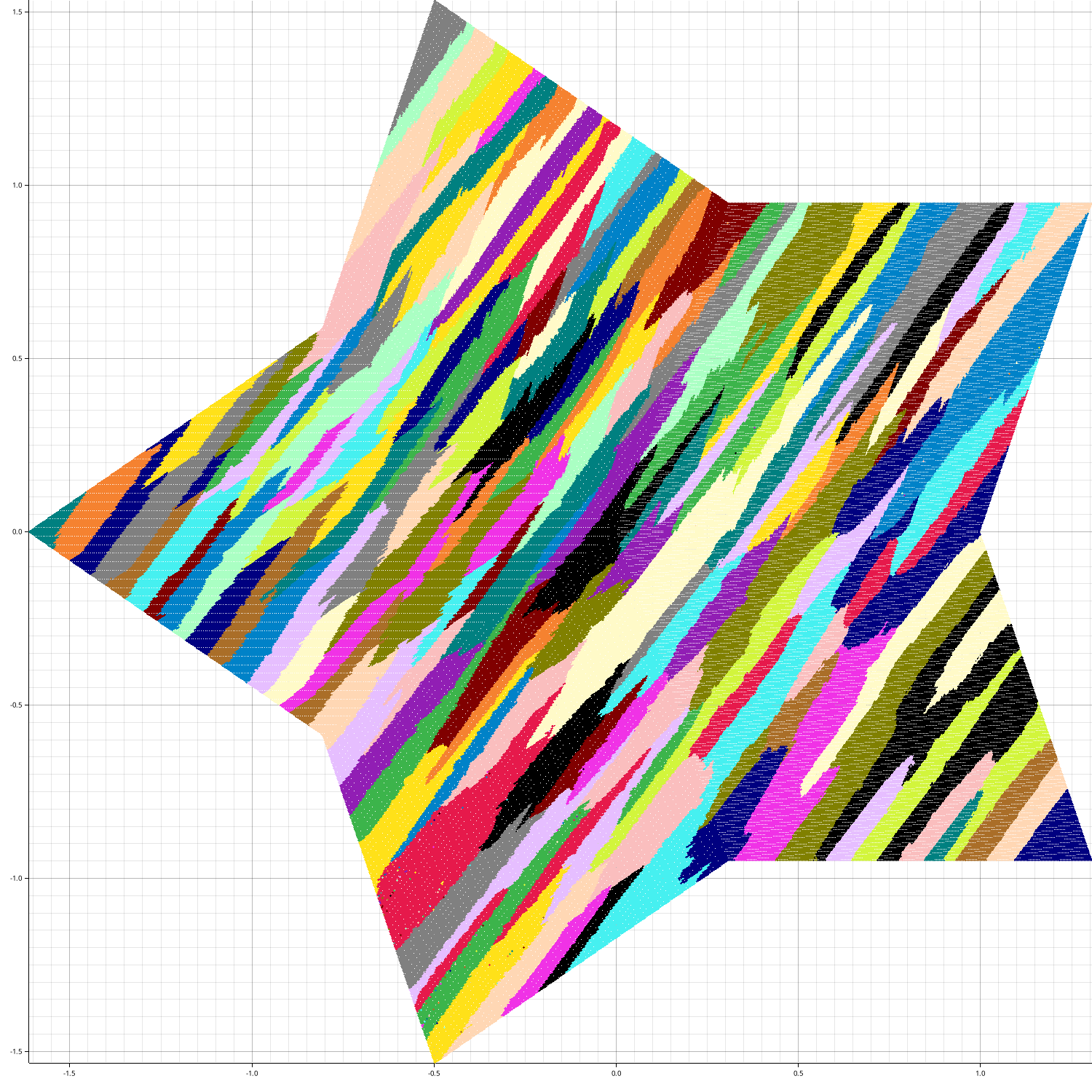}
  \end{tabular}
	\caption{Left: near-null grid function; generated by testing a solver on the homogeneous system as described in Section \ref{section: the adaptive process}. Right: `coarse grid' aggregation generated from the modularity matching algorithm applied to the `strength of connection' graph associated with this smooth error (described in Section \ref{section: modularity based AMG}).}
  \label{fig:2d_viz}
\end{figure}

 In Figure \ref{fig:2d_viz}, we color each vertex in the mesh according to which aggregate they belong after the modularity matching algorithm is applied using the associated algebraically smooth error vector generated by the `tester' phase of the adaptivity algorithm. As expected, algebraically smooth error varies slowly in the direction of anisotropy but quickly orthogonal to it. Additionally, notice that the coarse grid aggregates align nicely with the anisotropy. This demonstrates that our algorithm and implementation provide an algebraic approach akin to structured semi-coarsening techniques for this problem. 

\begin{figure}[htbp]
  \centering 
  \textbf{2d Adaptive Process Convergence}

  \vspace{0.5em}
  \begin{tabular}{cccccccc}
	  $h$ & DoFs & nnz & $N_{\mathrm{SA}}$ & $\gamma$ & $\mu$ & $\nu$ & $C_k$\\ 
	  \hline
	  $7$ & $1,313,281$ & $11,804,161$ & $3$ & $8$ & $1$ & $1$ & $1.8$
  \end{tabular}
	\vspace{0.5em}

  \setlength{\tabcolsep}{0.5pt}
  \begin{tabular}{cc}
    \includegraphics[width=.49\textwidth]{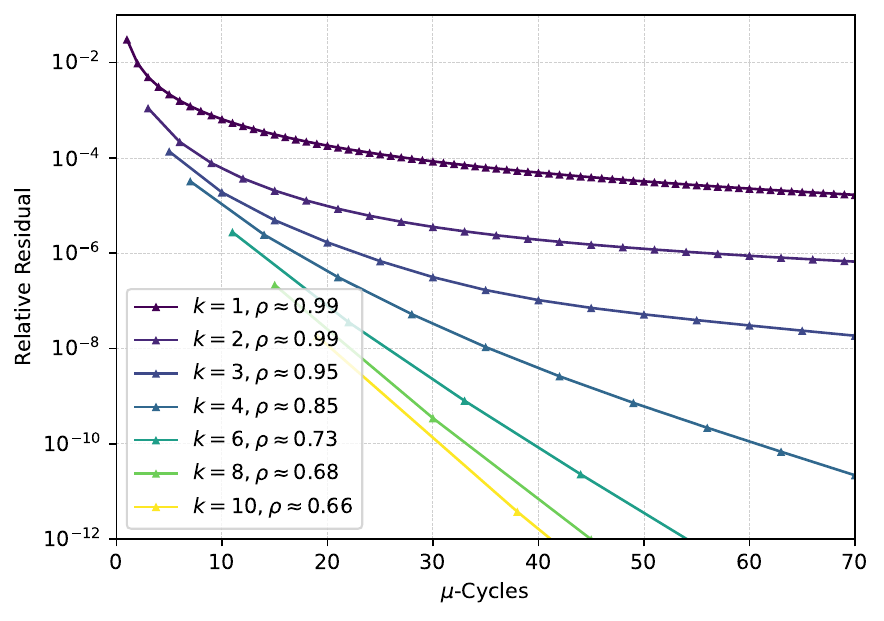} &
    \includegraphics[width=.49\textwidth]{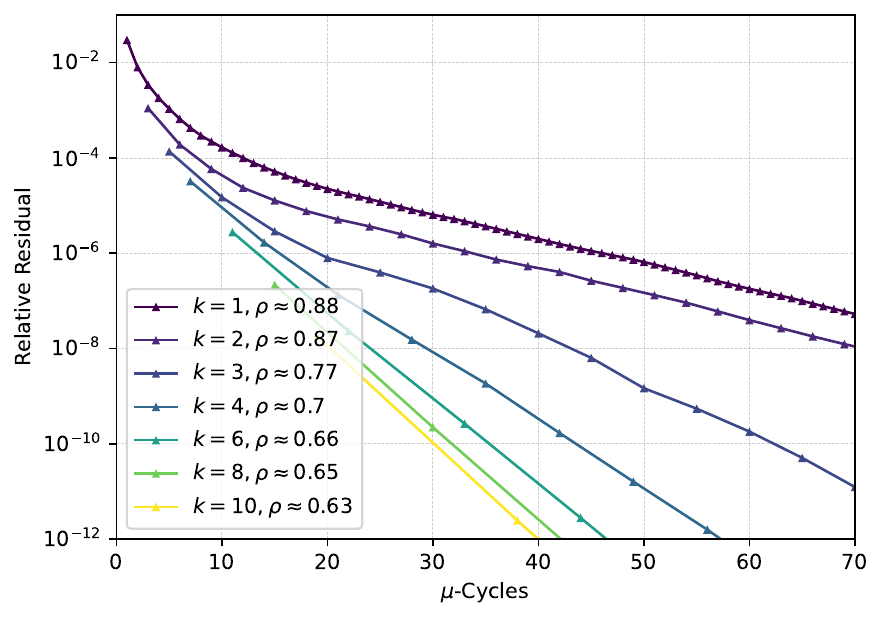}
  \end{tabular}
  \caption{Convergence rates of adaptive solvers with varying components. Left: stationary solver. Right: preconditioned conjugate gradient.}
  \label{fig:2d_convergence}
\end{figure}

Next, in Figure \ref{fig:2d_convergence}, we present the performance of the adaptive solver as more components are added. After a few components, the solver improves quickly and around 6 components the improvements diminish. Notice that at 10 components the convergence rates for the stationary method is as fast as when used as a preconditioner for conjugate gradient; reinforcing our claim in Section \ref{section: the adaptive process} that the adaptive algorithm resembles variational iterative methods.

Next, we investigate the scalability on a simple 3d refinement study. For this study we generate the standard unit cube mesh (again with MFEM API, \cite{mfem}) and solve PDE \ref{eq:anis_dif} using the same discretization process as the 2d experiment. The results of these tests are summarized in Table \ref{tab:3d_scaling} but the full convergence plots akin to Figure \ref{fig:2d_convergence} for each refinement can be found in Figure \ref{fig:3d_full_results} in the Appendix.

\begin{figure}[htbp]
  \centering 
  \textbf{3d Refinement Scaling Study Summary}

  \vspace{0.5em}
  \begin{tabular}{ccccc}
	  $k$ & $N_{\mathrm{SA}}$ & $\gamma$ & $\mu$ & $\nu$ \\ 
	  \hline 
	  $6$ & $6$ & $16$ & $1$ & $2$
  \end{tabular}
  \begin{tabular}{cccccc}
	  $h$ & DoFs & nnz & $C_k$ & Stat. & PCG\\ 
	  \hline
	  $1$ & $9,261$     & $128,581$    & $1.6$ & $16$ & $16$\\
	  $2$ & $68,921$    & $993,961$    & $1.9$ & $31$ & $28$\\
	  $3$ & $531,441$   & $7,815,121$  & $2.2$ & $46$ & $41$\\
	  $4$ & $4,173,281$ & $61,979,041$ & $2.5$ & $75$ & $60$
  \end{tabular}

  \caption{Left: solver parameters. Right: scaling results.}
  \label{tab:3d_scaling}
\end{figure}

We see from the summary in Table \ref{tab:3d_scaling} that there is a linear relationship between refinement level and operator complexity.
This can be mitigated by skipping the smoothing step the finest level interpolation matrix; but this comes at the cost of worsened convergence factor. More sophisticated approaches (not studied here) such as weight pruning may be more effective at reducing complexity without slowing convergence.
Last, the scaling of cycle count looks suboptimal, but the anisotropy ratio of $10^{-6}$ isn't captured on the low refinement discretizations making the convergence on these matrices faster than they should be.

\subsection{SPE10 Benchmark}
\label{exp:spe10}
The SPE10 \cite{spe10} is a $1,122,000$ cell, 3 dimensional, rectangular Cartesian grid which porosity data for each cell. Again, we solve the Diffusion PDE \ref{eq:anis_dif} on this domain but now the coefficient $K$ is a piecewise constant diagonal matrix defined by the provided permeability values shown in Figure \ref{fig:spe10-coef}. The extreme heterogeneity of the coefficient and complex sub-surface structures present challenges for standard multiscale methods. An effective multigrid method for this problem has to avoid destroying this structure in the coarse representations.

\begin{figure}
  \centering 
  \textbf{SPE10 Coefficient Structure}

  \includegraphics[width=0.49\textwidth]{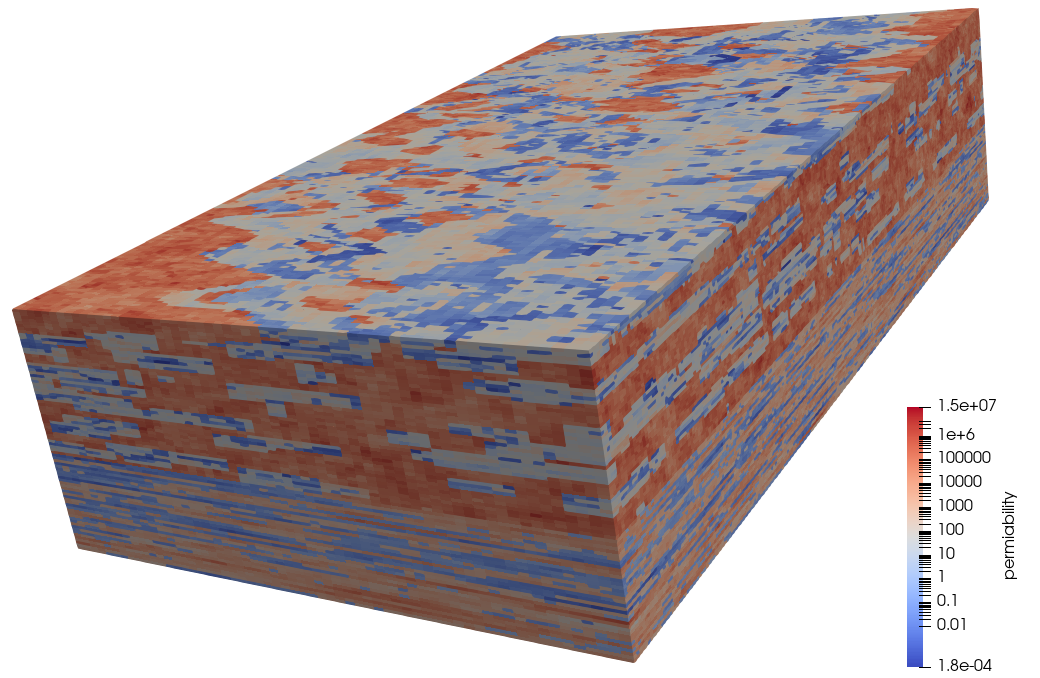}
  \includegraphics[width=0.49\textwidth]{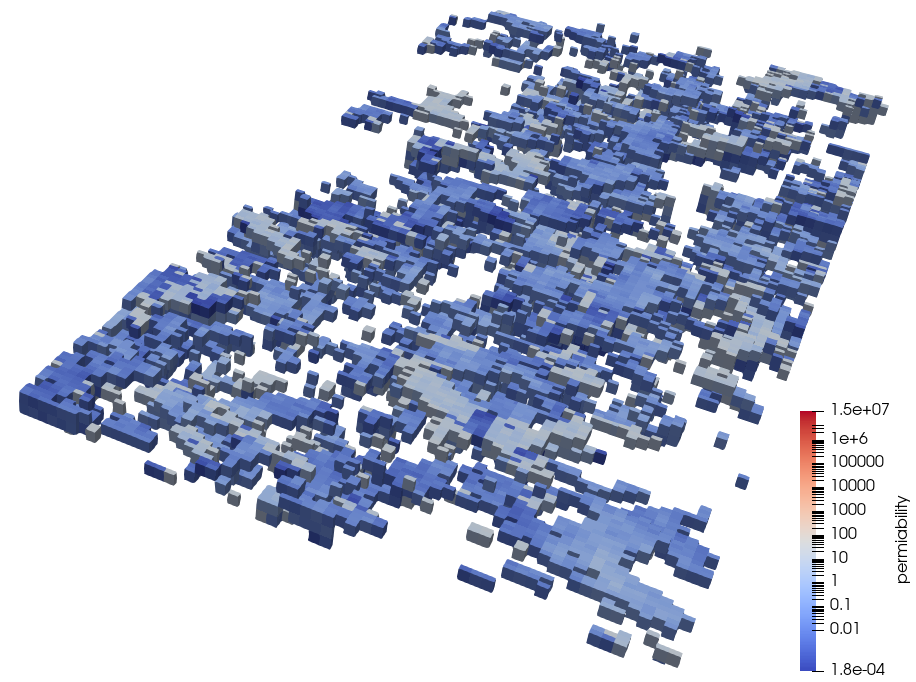}
  \caption{Left: full mesh with permiability coefficient magnitude visualized. Right: thin slice from the upper region of the geometry with highly permiable areas clipped to show complex structure.}
  \label{fig:spe10-coef}
\end{figure}

\begin{figure}[htbp]
  \centering 
  \textbf{SPE10 Adaptive Process Convergence}

  \vspace{0.5em}
  \begin{tabular}{cccccccc}
    $h$ & DoFs & nnz & $N_{\mathrm{SA}}$ & $\gamma$ & $\mu$ & $\nu$ & $C_k$\\ 
	  \hline
	  $0$ & $1,159,366$ & $30,628,096$ & $3$ & $16$ & $1$ & $1$ & $1.5$
  \end{tabular}
	\vspace{0.5em}

  \setlength{\tabcolsep}{0.5pt}
  \begin{tabular}{cc}
    \includegraphics[width=.49\textwidth]{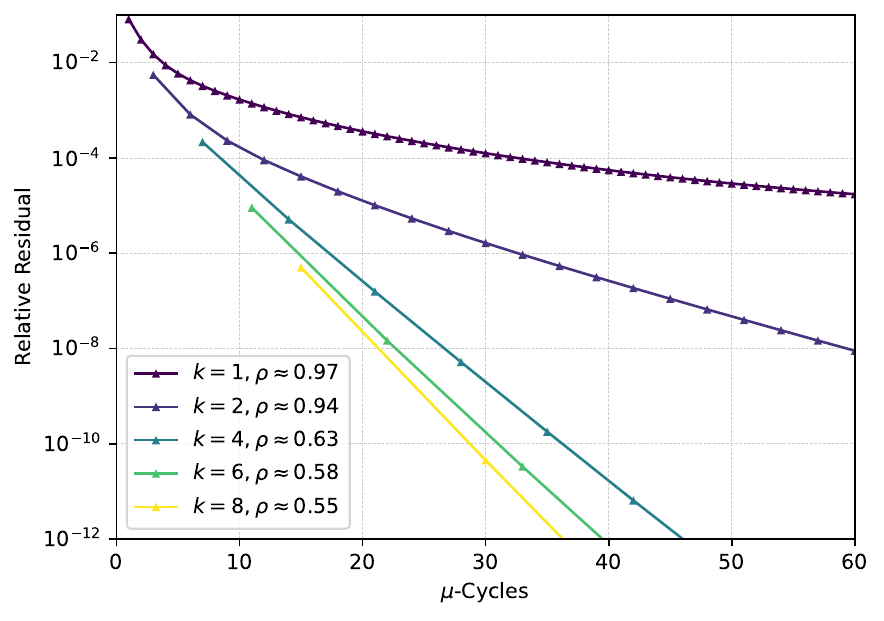} &
    \includegraphics[width=.49\textwidth]{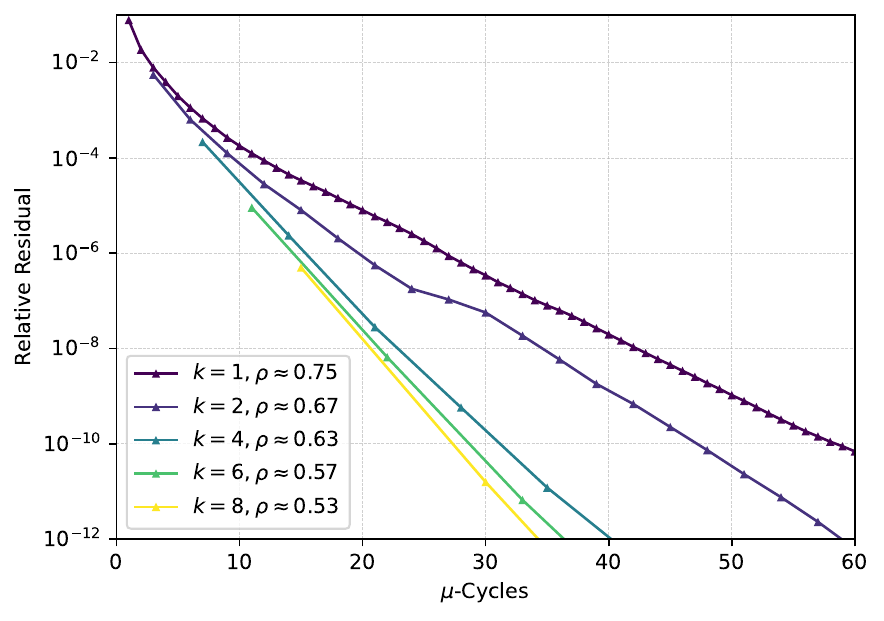}
  \end{tabular}
  \caption{Convergence rates of adaptive solvers with varying components. Left: stationary solver. Right: preconditioned conjugate gradient.}
  \label{fig:spe10_convergence}
\end{figure}

This benchmark isn't nearly as challenging to solve as the anisotropy problems but the convergences shown in Figure \ref{fig:spe10_convergence} demonstrate that this problem can still benefit from adaptivity. With only $3$ SA-AMG candidates each component the bulk of the adaptive improvements are realized with just $4$ components and after $8$ components the convergence rate of the stationary method is the same as preconditioned conjugate gradient.

\subsection{Suitsparse (AMD G3 Circuit)}
\label{exp:g3}
The final experiment we present demonstrates that the adaptivity process works on SPD problems which don't come from PDE discretizations. The matrix studied here comes from the suitesparse \cite{suitesparse} collection called \verb|G3_Circuit| submitted by AMD coming from a circuit simulation problem.

\begin{figure}[htbp]
  \centering 
  \textbf{G3 Circuit Adaptive Process Convergence}

  \vspace{0.5em}
  \begin{tabular}{ccccccc}
    DoFs & nnz & $N_{\mathrm{SA}}$ & $\gamma$ & $\mu$ & $\nu$ & $C_k$\\ 
	  \hline
	  $1,585,478$ & $7,660,826$ & $3$ & $16$ & $1$ & $1$ & $1.7$
  \end{tabular}
	\vspace{0.5em}

  \setlength{\tabcolsep}{0.5pt}
  \begin{tabular}{cc}
    \includegraphics[width=.49\textwidth]{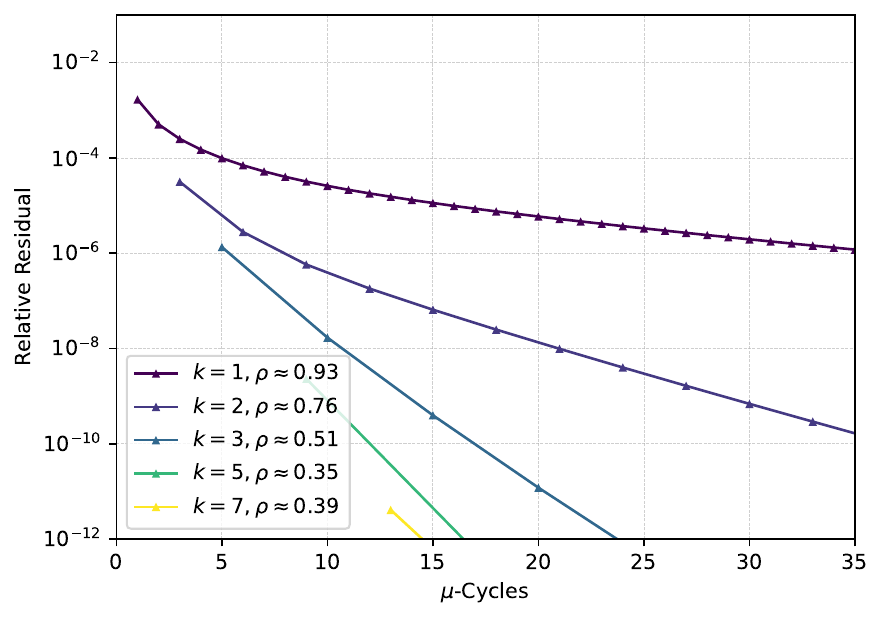} &
    \includegraphics[width=.49\textwidth]{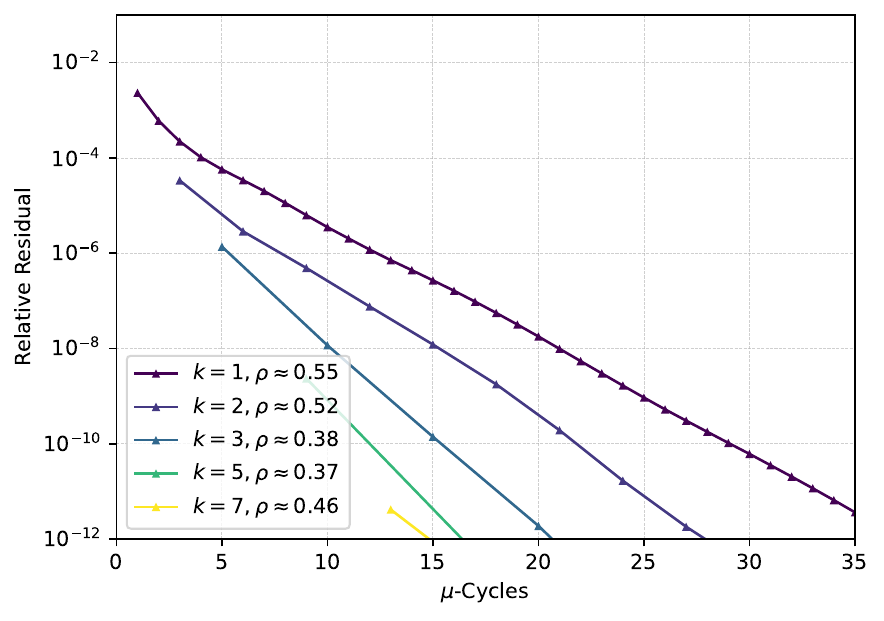}
  \end{tabular}
  \caption{Convergence rates of adaptive solvers with varying components. Left: stationary solver. Right: preconditioned conjugate gradient.}
  \label{fig:g3_convergence}
\end{figure}

Similar to the SPE10, this is a less challenging test matrix. Figure \ref{fig:g3_convergence} shows that with only $3$ SA-AMG candidates the adaptive improvements are realized with just $3$ components and after $5$ components the convergence rate of the stationary method is the same as preconditioned conjugate gradient. At $7$ components, there are machine precision related numerical issues after a single application as a stationary method indicating the $7$ component operator is nearly equivalent to $A^{-1}$.

\section{Conclusions}\label{section: conclusions}
In this paper, we provided a theoretical justification of utilizing near-null components of the preconditioned matrix $B^{-1}A$ for a number of solvers $B$ in the construction of new AMG-based hierarchies. The result is that these vectors are actually near-null components of the original matrix $A$ (and not only of the preconditioned one $B^{-1}A$). The theoretical findings are illustrated with numerical tests. 
We also tested a graph-modularity based approach for coarsening utilizing a process to create strength of connection graphs parameterized by the identified near-null components. The result is the creation of a sequence of different hierarchies created to target specific slow-to-converge error modes which are combined in a symmetric composite manner to create an adaptive preconditioner. 

The numerical experiments demonstrate that the developments to adaptive SA-AMG provide a promising solution to problems for which no scalable solvers exist, especially in the case of extremely anisotropic elliptic PDE discretizations. The second notable takeaway from the experiments is that as many components are added to the composite adaptive operator the convergence rate as a stationary iteration approaches the convergence rates obtained using the same operator as a preconditioner for variational methods such as Krylov acceleration.

Two main challenges are also apparent from the results of the experiments. First, the time to set up the solver is quite extreme and for especially degenerate systems many near-null candidate vectors may be required to get good convergence. Techniques to speed up this process may be available but these composite methods will only be practical in scenarios where the setup cost can be amortized over many linear solves or if the system is changing slowly between solves (potentially in a time integration scheme) then much of the setup can likely be reused. Second, there is a large memory requirement when multiple hierarchies with large operator complexities are required. This may be quite prohibitive in a large scale application and other techniques such as pruning interpolations may be required to control the complexities and/or once the near-null components are identified one may recreate a new SA-AMG hierarchy using the multiple components simultaneously (as in the original SA-AMG methods) similarly to the approach taken in \cite{DV2019}.

Finally, we should mention that the algorithms developed in this study are implemented in Rust, a modern systems programming language renowned for its efficacy in low-level implementations of high-performance libraries. Rust's features align well with the demands of complex computational tasks, offering both safety and efficiency. The source code for these implementations is available on GitHub, and can be accessed at \href{https://github.com/aujxn/amg_match}{github.com/aujxn/amg\textunderscore match}.

\bibliographystyle{my_etna_preprint}
\bibliography{references.bib}

\begin{appendix} \label{sec:full_plots}
\begin{figure}[htbp]
  \centering
  \textbf{3d Refinement Scaling Study: Full Convergence Plots}
  
  \vspace{0.3em}
  \begin{tabular}{cccc}
	  $N_{\mathrm{SA}}$ & $\gamma$ & $\mu$ & $\nu$ \\ 
	  \hline 
	  $6$ & $16$ & $1$ & $2$
  \end{tabular}
  \begin{tabular}{cccc}
    $h$ & DoFs & NNZ & $C_k$\\ 
    \hline
    $1$ & $9,261$     & $128,581$    & $1.6$\\
    $2$ & $68,921$    & $993,961$    & $1.9$\\
    $3$ & $531,441$   & $7,815,121$  & $2.2$\\
    $4$ & $4,173,281$ & $61,979,041$ & $2.5$
  \end{tabular}
  \vspace{0.3em}

  \setlength{\tabcolsep}{0.5pt}
  \begin{tabular}{cc}
    \includegraphics[width=.40\textwidth]{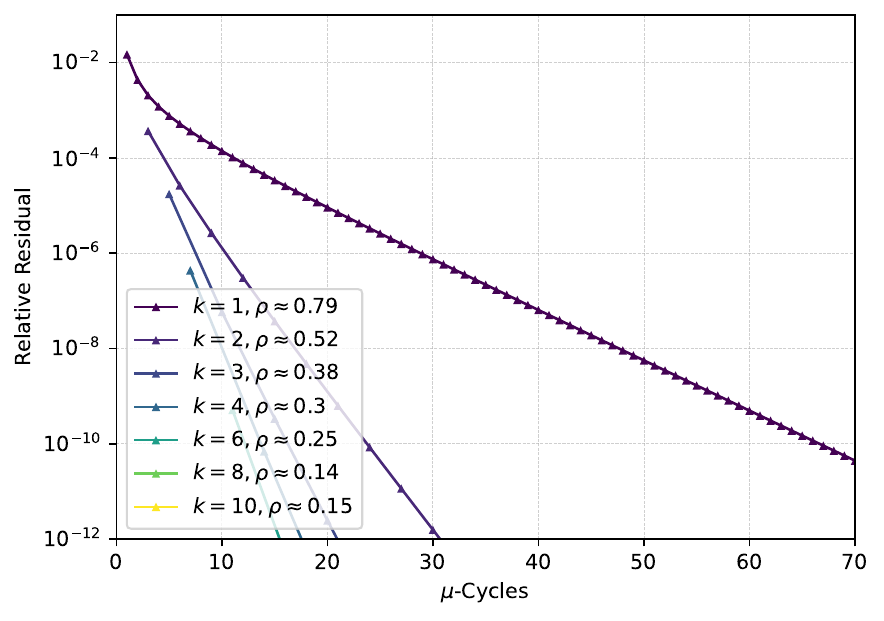} &
    \includegraphics[width=.40\textwidth]{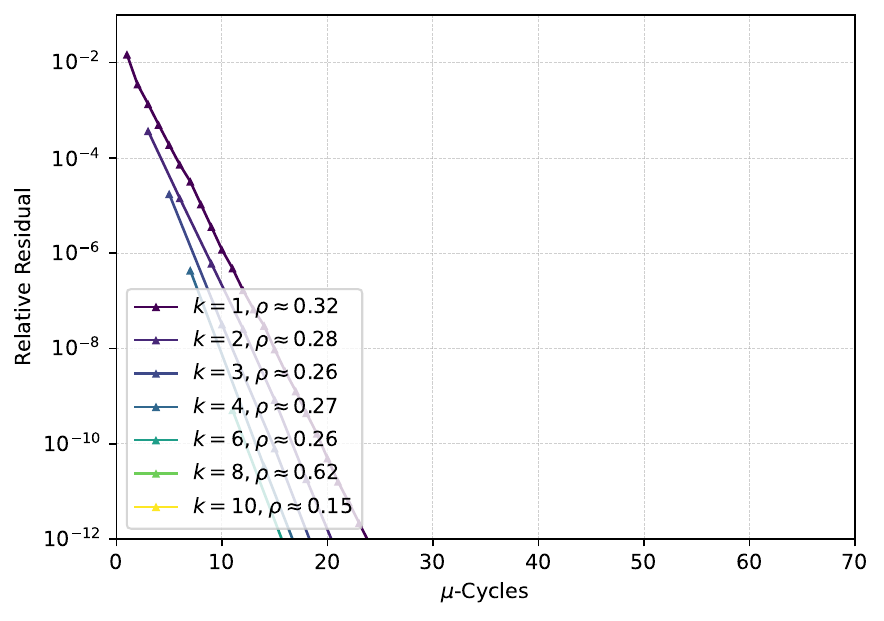}\\
    \includegraphics[width=.40\textwidth]{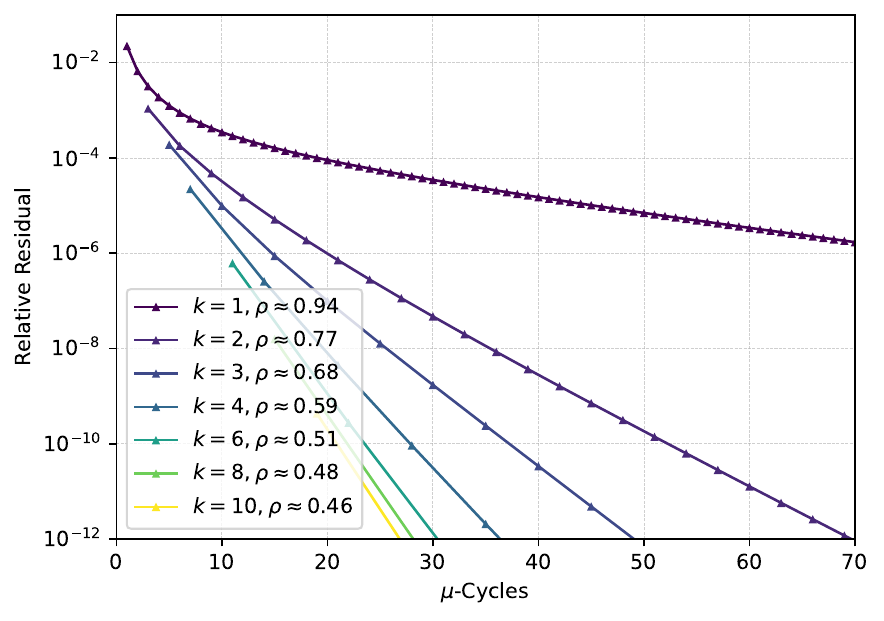} &
    \includegraphics[width=.40\textwidth]{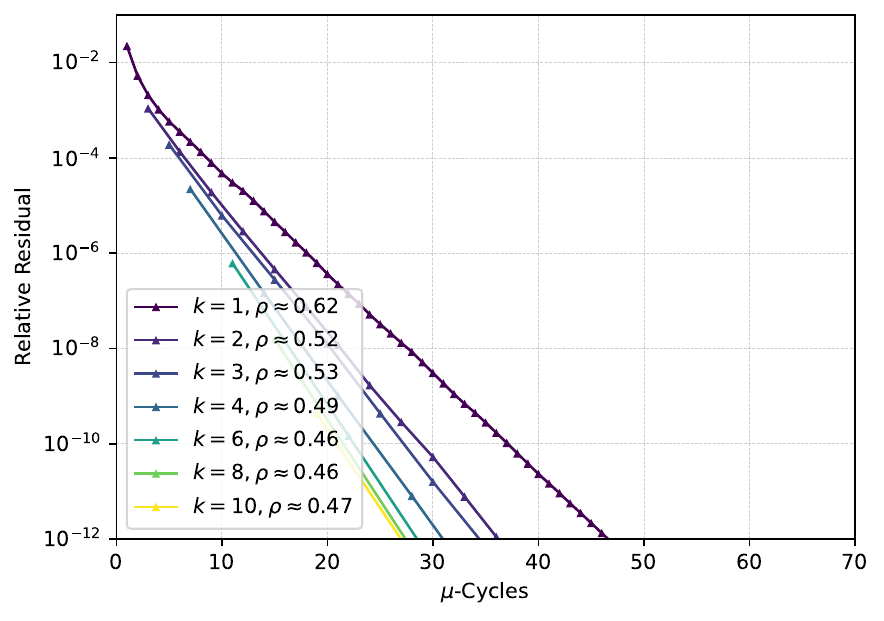}\\
    \includegraphics[width=.40\textwidth]{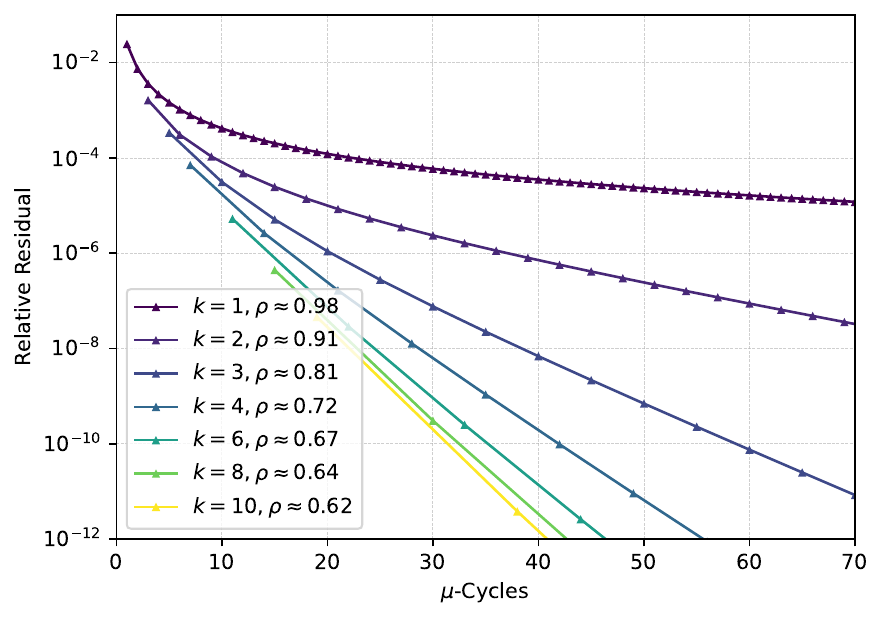} &
    \includegraphics[width=.40\textwidth]{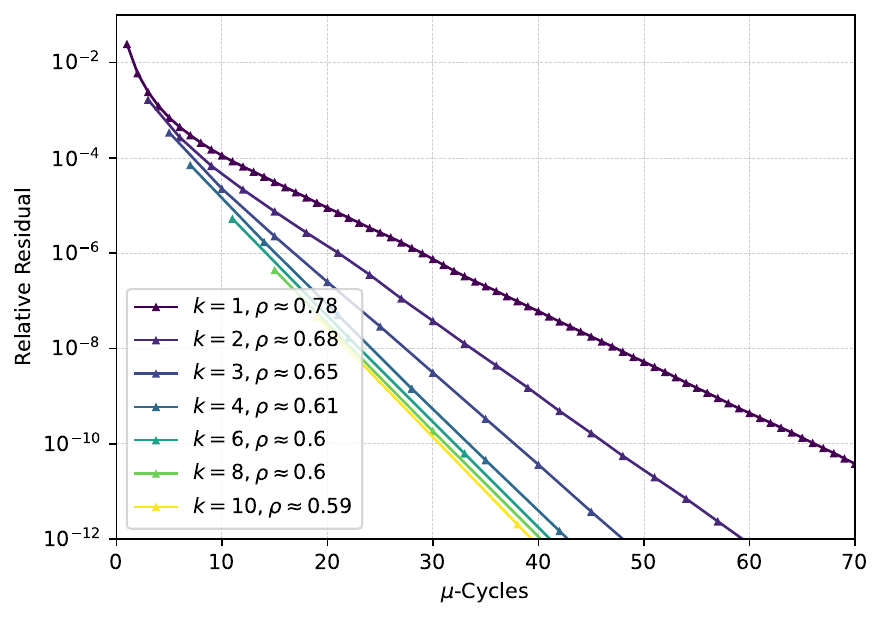}\\
    \includegraphics[width=.40\textwidth]{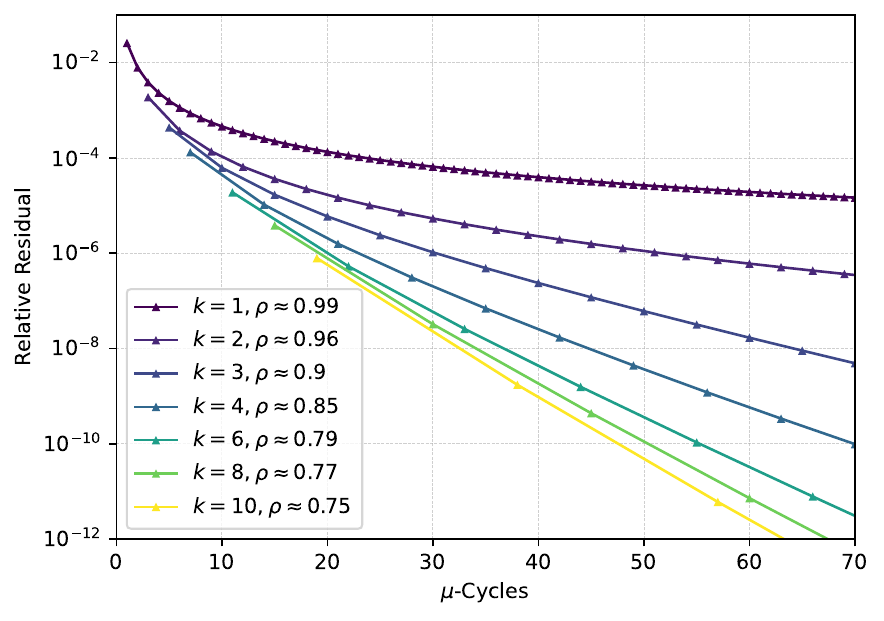} &
    \includegraphics[width=.40\textwidth]{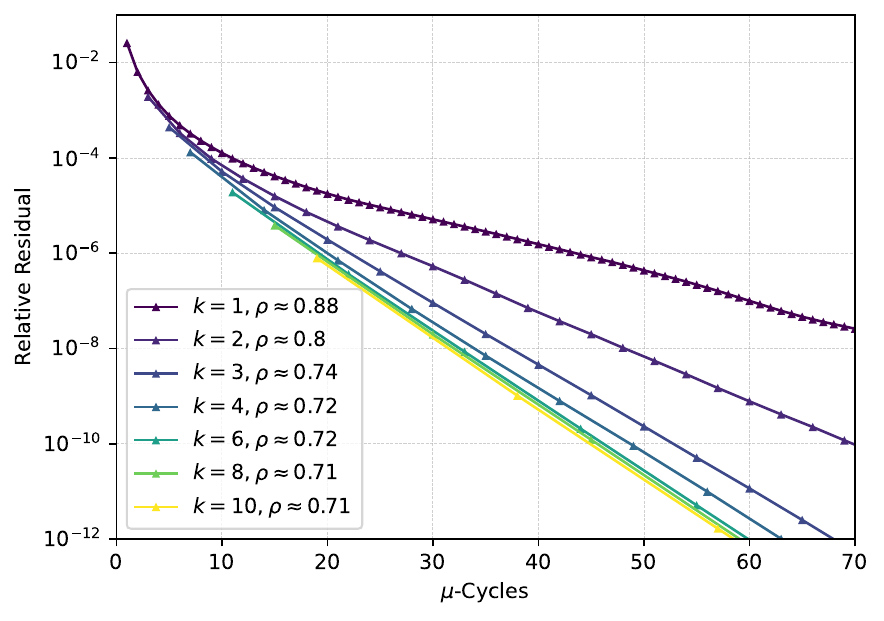}
  \end{tabular}
  \caption{Convergence rates of adaptive solvers with varying components. Left: stationary solver. Right: preconditioned conjugate gradient. Top to bottom: increasing refinement level.}
  \label{fig:3d_full_results}
\end{figure}
\end{appendix}

\end{document}